\def\wC{\wh C}
\def\wrho{\wh \r}
  \def\d{\delta} \def\D{\Delta}
\def\G{\Gamma}  \def\k{\kappa}
  \def\n{\nu} 
\def\r{\rho}  \def\s{\sigma} 
 \def\om{\omega}
\newtheorem{theorem}{Theorem}
\newtheorem{lemma}[theorem]{Lemma}
\newtheorem{claim}{Claim}
\newcommand{\wh}[1]{\widehat{#1}}
\newcommand{\rdown}[1]{{\left\lfloor #1\right \rfloor}}
\newcommand{\brac}[1]{\left(#1\right)}
\newcommand{\bfrac}[2]{\left(\frac{#1}{#2}\right)}
\newcommand{\set}[1]{\left\{#1\right\}}
\def\E{\mathbb{E}}
\def\Pr{\mathbb{P}}
\newcommand{\ignore}[1]{}
\def\cC{{\mathcal C}}
\newcommand{\beq}[2]{\begin{equation}\label{#1}#2\end{equation}}
\newcommand{\RST}{\textbf{RST}}
\newcommand{\RPM}{\textbf{RPM}}
\newcommand{\RHC}{\textbf{RHC}}
\begin{document}
\author{
Deepak Bal\thanks{Department of Mathematics, Montclair State University, Montclair NJ 07043, USA; e-mail: \texttt{deepak.bal@montclair.edu}},
Alan Frieze\thanks{Department of Mathematical Sciences, Carnegie Mellon University, Pittsburgh PA 15213, USA; e-mail: \texttt{frieze@cmu.edu}; research supported in part by NSF grant DMS1952285.}, 
and Pawe\l{} Pra{\l}at\thanks{Department of Mathematics, Toronto Metropolitan University, Toronto, ON, Canada; e-mail: \texttt{pralat@torontomu.ca}; research supported in part by NSERC grant; part of this work was done while the author was visiting the Simons Institute for the Theory of Computing.}
}

\title{Rainbow spanning trees in randomly coloured $G_{k-out}$}
\maketitle

\begin{abstract}
Given a graph $G=(V,E)$ on $n$ vertices and an assignment of colours to its edges, a set of edges $S \subseteq E$ is said to be rainbow if edges from $S$ have pairwise different colours assigned to them. In this paper, we investigate rainbow spanning trees in randomly coloured random $G_{k-out}$ graphs.
\end{abstract}

\section{Introduction}

Let $G=(V,E)$ be a graph in which the edges are coloured. A set $S\subseteq E$ is said to be {\em rainbow coloured} if each edge of $S$ is in a different colour. There is by now a large body of research on the existence of rainbow structures in randomly coloured random graphs. Let us highlight a few selected results. Frieze and McKay~\cite{FM} and Bal, Bennett, Frieze and Pra\l{}at~\cite{BBFP} studied the existence of rainbow spanning trees in $G_{n,m}$, the classical Erd\H{o}s--R\'enyi random graph process. Cooper and Frieze~\cite{CF}, Frieze and Loh~\cite{FL} and Ferber and Krivelevich~\cite{FK} studied the existence of rainbow Hamilton cycles in $G_{n,m}$. Janson and Wormald~\cite{JW} studied the existence of rainbow Hamilton cycles in random regular graphs. Finally, Bal, Bennett, P\'erez-Gim\'enez and Pra\l{}at~\cite{RGG}, investigated rainbow perfect matchings and Hamilton cycles in random geometric graphs. Of the most popular random graph models, what is missing here is the random multigraph $G_{k-out}$. The aim of this paper is to initiate the study of these problems in the context of a randomly coloured $G_{k-out}$ graphs.

All asymptotics throughout are as $n \to \infty$ (we emphasize that the notations $o(\cdot)$ and $O(\cdot)$ refer to functions of $n$, not necessarily positive, whose growth is bounded). We say that an event in a probability space holds \emph{with high probability} (or \emph{w.h.p.}) if the probability that it holds tends to one as $n\to \infty$. We often write $G_{k-out}$ when we mean a graph drawn from the distribution $G_{k-out}$.

The random graph $G_{k-out}=G_{k-out}(n)$ is defined as follows. It has vertex set $[n]:=\{1, \ldots, n\}$ and each vertex $i \in [n]$ independently chooses $k$ random distinct neighbours from $[n] \setminus \{i\}$, so that each of the $\binom{n-1}{k}$ sets is equally likely to be chosen. It was shown by Fenner and Frieze~\cite{FF} that $G_{k-out}$ is $k$-connected w.h.p.\ for $k\geq 2$. It was shown by Frieze~\cite{F1} that $G_{2-out}$ has a perfect matching w.h.p., and by Bohman and Frieze~\cite{BF} that $G_{3-out}$ is Hamiltonian w.h.p. All of the above results are sharp. For more details we direct the reader to Chapter~18 in~\cite{Frieze_Book}.

We define the randomly coloured graph $G_{k,q}=G_{k,q}(n)$ (not to be confused with $G_{n,m}$) as follows: the underlying graph on $n$ vertices is $G_{k-out}$ and 
(i) there is a set $Q$ of $q$ colours,
(ii) each colour appears $\r:=\rdown{kn/q}$ or $\r+1$ times (there are $kn-q\r$ \emph{popular} colours that appear $\r+1$ times, the remaining colours are \emph{unpopular} and appear $\r$ times; note that if $q$ divides $kn$, then all colours are unpopular),
(iii) $kn$ colours, including repetitions, are randomly assigned to the $kn$ edges of $G_{k-out}$. Finally, let us note that, without loss of generality, we may assume that $q \le kn$. Indeed, if the number of colours is more than $kn$, then some colours are not used at all and the problem is equivalent to the one with $q=kn$.

In this paper we investigate spanning trees. We will prove the following theorem. 
\begin{theorem}\label{th1}
If $k\geq 2$ and $q\geq n-1$, then $G_{k,q}$ has a Rainbow Spanning Tree (\RST) w.h.p.
\end{theorem}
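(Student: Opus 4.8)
The plan is to rely on the classical min--max (matroid-intersection) characterization of rainbow spanning trees. A rainbow spanning forest is exactly a set of edges independent both in the cycle matroid of the underlying graph and in the partition matroid in which a set is independent iff no colour is repeated; a rainbow spanning tree is a common base of size $n-1$. Applying the matroid intersection theorem, and observing that the extremal set in the resulting min--max expression may be taken to be the set of edges lying inside the classes of a partition, one obtains: $G_{k,q}$ has an \RST{} if and only if for every partition $\cP$ of $[n]$ into $r$ nonempty classes, the number of distinct colours that occur on edges joining two different classes is at least $r-1$. It therefore suffices to prove that w.h.p.\ no partition violates this inequality.

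First I would reparametrise and dispose of the extreme cases. Write $a_1\ge\cdots\ge a_r$ for the class sizes and let $m:=n-r=\sum_i(a_i-1)$ be the excess of $\cP$. Since $q\le kn$ every colour is used (each at least $\r\ge1$ times), so a colour is absent from the crossing edges precisely when none of its copies is a crossing edge; call such a colour hidden. As every colour is used, the number of crossing colours equals $q$ minus the number of hidden colours, and the characterization fails for $\cP$ exactly when at least $q-n+m+2$ colours are hidden; the tightest case is $q=n-1$, where we must forbid $m+1$ hidden colours in any partition of excess $m$. The boundary partitions are then immediate: $r=2$ needs only one crossing edge, supplied by the connectivity of $G_{k-out}$ for $k\ge2$ from \cite{FF}, while $r=n$ (all singletons, $m=n-1$) has no internal edges and hence no hidden colours.

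Next I would clear the bulk of partitions with a crude pigeonhole estimate. A partition of excess $m$ has at most $\sum_i a_i\min(k,a_i-1)\le 2mk$ internal (non-crossing) edges, and each colour occupies at most $\r+1\le k+1$ edges, so at most $2mk/\r$ colours can possibly be hidden. When $q$ is near $n$ this is about $2m$, only a constant factor above the forbidden value $m+1$; in particular this already settles every partition whose internal-edge count is comfortably below $m\r$, which includes all partitions built from few or small classes.

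The heart of the argument is a first-moment bound over the surviving shapes, organised by the excess $m$ and by the profile of the non-singleton classes, and it must exploit randomness twice. Revealing $G_{k-out}$ first, a fixed partition of excess $m$ has expected internal-edge count only $\E[I]=\frac{k}{n-1}\sum_i a_i(a_i-1)=O(km^2/n)$, so its internal edges are typically far below the worst case $2mk$; conditioning on the (small) internal edge set of size $I$, the random colouring then hides a given colour with probability roughly $(I/kn)^{\r}$. One multiplies the resulting expected number of hidden colours by the number of partitions of excess $m$ and sums over $m$. The main obstacle is exactly this summation: the number of partitions of excess $m$ grows like $n^{\Theta(m)}$, which swamps a naive Markov bound applied to the pigeonhole estimate, so the scarcity of internal edges coming from the graph and the scarcity of monochromatic internal colour classes coming from the colouring must be combined sharply enough to beat the entropy of partition shapes. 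I expect the delicate regime to be $k=2$, $q=n-1$, where $\r$ is as small as $2$ and the gap between what pigeonhole gives and what is required is narrowest.
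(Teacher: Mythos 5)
Your reduction of the problem is sound: the partition form of the matroid-intersection condition you state is equivalent to the colour-set condition the paper works with (a partition of excess $m$ with at least $q-n+m+2$ hidden colours exists iff there is a colour set $I$ with $\kappa(C_I)\ge q+2-|I|$), and your boundary cases and pigeonhole estimate are correct. But the proposal has a genuine gap, and it sits exactly where you admit it does: the sentence saying that the scarcity of internal edges and the scarcity of hidden colours ``must be combined sharply enough to beat the entropy of partition shapes'' is a restatement of the entire difficulty, not a step of a proof. The paper does not close this with a single sharpened first moment; it needs three different arguments in three regimes of the number $\ell$ of hidden colours. For $4\le\ell\le n/20$ the first moment only converges after a structural reduction (Claim~\ref{bridges}: a minimal violating colour set induces a \emph{bridgeless} graph, so its nontrivial components have at least $3$ vertices and the vertex support has size at most $3(\ell-1)/2$); without this the union bound diverges. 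In the middle range a union over colour sets is hopeless (there are $2^{\Theta(n)}$ of them), so the paper changes the order of exposure: it reveals the colour-assignment bipartite multigraph $\G_2$ first (a random $2$-regular bipartite graph), controls its cycle structure (Lemmas~\ref{cycles} and~\ref{lem:cycles_G2}), and then takes a union over only $\exp(O(n\log\log n/\log n))$ path/cycle configurations, with a Chernoff bound per configuration. For $\ell$ near $q$ it passes to the complementary set of $m=q-\ell$ crossing colours, and this count only works after (i) restricting to a \emph{maximal} violating $I$, so that every non-hidden colour joins two distinct components, and (ii) an expansion lemma (Lemma~\ref{big}: w.h.p.\ $e(S,[n]\setminus S)>2m+2$ for all $9m\le|S|\le n/2$) forcing $n_1\ge n-9m$; even then the dominant-term analysis is so tight that the constant $e^2$ must be improved to $B\le e^2-10^{-12}$ for the series to converge. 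None of these ideas appears in your plan.

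Two further concrete problems. First, your reduction to the ``tightest case'' $k=2$, $q=n-1$ is asserted, not proved: changing $q$ or $k$ changes $\r=\rdown{kn/q}$ and hence the entire colouring distribution, so monotonicity is not automatic; the paper needs two genuine couplings for it, a partition-refinement coupling in the number of colours (Section~\ref{sec:monotonicity}) and a contiguity argument identifying the colour pattern with a random $(k{+}1)$-regular bipartite graph, i.e.\ a union of independent perfect matchings (Section~\ref{sec:degmonotonicity}). Second, your description of the ``heart'' of the argument misreads the regime that is actually hardest. For partitions with a giant class --- excess $m=\Theta(n)$, which is where the paper spends its Section 3.3 --- one has $\E[\mbox{internal edges}]=\Theta(kn)$: internal edges are nearly \emph{all} edges, and the probability that a given colour is hidden is $\Theta(1)$, so both of your levers (few internal edges from the graph, small per-colour hiding probability from the colouring) are powerless there. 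In that regime the rare event is the complementary one: expansion forces many crossing edges, yet a violation would require them to reuse very few colours, each colour having only $\r$ or $\r+1$ copies. So the plan as written stalls both at the entropy bottleneck you yourself flagged and in the large-class regime whose geometry it gets backwards.
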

The result is best possible. Trivially, if $q \le n-2$, then there are not enough colours to create a rainbow tree. If $k = 1$, then $G_{k,q}$ is disconnected w.h.p.~\cite{FF}.

\section{Preliminaries}

\subsection{Colour Monotonicity}\label{sec:monotonicity}

In our problem, we randomly colour $kn$ edges of $G_{k,q}$ with $q$ colours and we aim to create a rainbow structure. Recall that there are $q_2 = kn-q\r$ popular colours that are present $\r+1$ times and $q_1 = q-q_2$ unpopular colours that are present $\r$ times. 

It is natural to expect that the more colours are available, the easier it is to achieve our goal. We prove this monotonicity property in the following, slightly broader, context. 
Suppose that we are given a finite set $X$ and a set of colours $C$ where $|C|=|X|$. (In our application, $X$ is the set of $kn$ edges of $G_{k,q}$ and $C$ is the set of $kn$ colours: $q$ colours from set $Q$, including repetitions.) We also have two distinct partitions of $C$: $\mathcal{C}=\{C_1, \ldots, C_q\}$ and $\mathcal{\wC}=\{\wC_1, \ldots ,\wC_{q+1}\}$, for some positive integer $q \le |X|-1$. (In our application, each part corresponds to a colour from set $Q$. Partitions $\mathcal{C}$ and $\mathcal{\wC}$ correspond to colourings with $q$ and $q+1$ colours respectively.) Let $\r = \rdown{ |X|/q }$, $\wrho = \rdown{ |X|/(q+1) }$, $q_2 = |X|-q\r$, and $q_1 = q-q_2$. Suppose that $|C_i|=\r$ for $1 \le i \le q_1$ and $|C_i| = \r+1$ for $q_1 + 1 \le i \le q$, that is, there are $q_1$ parts in $\mathcal{C}$ of size $\r$ and $q_2$ parts of size $\r+1$.

We are given a collection of sets $X_1,X_2,\ldots$, each set $X_i$ is a subset of $X$. (In our application, the $X_i$ are the edges of spanning trees, perfect matchings, Hamilton cycles, etc.) Our goal is to create at least one  rainbow set from this collection. Let us consider a random colouring of $X$ via a random bijection from $C$ to $X$. In order to show that the probability that some $X_i$ is rainbow in a random colouring with $q+1$ colours is at least the corresponding probability when elements of $X$ are coloured with $q$ colours, we need to couple the two partitions. In order to do that we need to consider two cases. 

\textbf{Case 1}: $\wrho = \r$. Partition $\mathcal{\wC}$ is obtained from $\mathcal{C}$ by choosing $\r$ parts in $\mathcal{C}$ of size $\r+1$ and replacing them with $\r+1$ parts of size $\r$ (see Figure~\ref{fig:case1}). We couple the two colourings by first randomly mapping the $|C|-\r(\r+1)$ colours from the parts that are the same in both partitions, and conditioning on the result. If some rainbow $X_i$ is created, then it is clearly present in both colourings. Otherwise, it is easy to see that partition $\mathcal{\wC}$ is at least as likely to complete a rainbow colouring. Indeed, suppose that some $X_i$ has $s$ elements that are not coloured yet; we may assume that $1 \le s \le \r$ as, otherwise, such a set cannot be rainbow via the first partition (it could be rainbow via the second partition if $s = \r+1$). The probability that partition $\cC$ completes a rainbow colouring is equal to $\prod_{i=1}^{s-1} \frac { \r (\r+1) - i (\r+1) }{ \r (\r+1) - i}$ that is at most the corresponding probability for partition $\mathcal{\wC}$, namely, $\prod_{i=1}^{s-1} \frac { \r (\r+1) - i \r }{ \r (\r+1) - i}$.

\begin{figure}[h]
\includegraphics[width=0.95\textwidth]{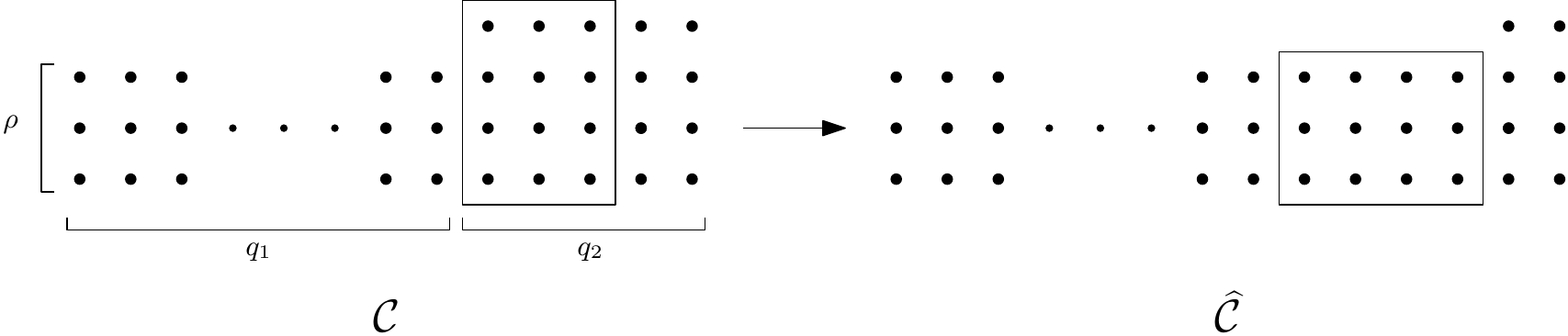}
\caption{The coupling of the two colourings in Case~1.\label{fig:case1}}
\end{figure}

\textbf{Case 2}: $\wrho = \r-1$. As before, we start with partition $\mathcal{C}$ but this time we take all $q_2$ parts of size $\r+1$ (possibly $q_2=0$ so we might not have any) and we choose $\r - 1 - q_2$ parts of size $\r$. To create partition $\wC$, we replace them with $q_2$ parts of size $\r$ and $\r - q_2$ parts of size $\r-1$. As in the other case, either we create some rainbow $X_i$ or partition $\mathcal{\wC}$ is at least as likely to complete a rainbow colouring of some set $X_i$.

The above coupling allows us to concentrate on the minimum number of colours. In particular, in the proof of Theorem~\ref{th1}, without loss of generality, we may assume that $q=n-1$. 

\subsection{Degree Monotonicity}\label{sec:degmonotonicity}

Based on Section~\ref{sec:monotonicity}, in order to prove Theorem~\ref{th1} we may assume that $q=|Q|=n-1$. In this setup, there are $k$ popular colours present $k+1$ times in $G_{k,q}$ and $q-k=n-1-k$ unpopular colours present $k$ times in $G_{k,q}$. Exactly one out of $k+1$ copies of each popular colour is called \emph{special}. Similarly, there are $k+1$ popular colours present $k+2$ times in $G_{k+1,q}$, $q-(k+1)=n-2-k$ unpopular colours present $k+1$ times, and there are $k+1$ special copies of popular colours (one special copy of each). 

It seems reasonable to expect that if $G_{k,q}$ has a particular rainbow structure w.h.p., then so does $G_{k+1,q}$.  Let $\G_{k+1}$ be the bipartite graph with vertex sets $[n]$ and $Q'=Q\cup \set{q}$, where $q$ is a ``dummy'' vertex that will be associated with special copies of popular colours. For each of the $(k+1)$ edges chosen by $v\in[n]$ in $G_{k+1,q}$, we observe a colour $c$ of that edge without exposing its other endpoint. If a copy of colour $c$ is non-special, then we add an edge between $v$ and $c\in Q$ in $\G_{k+1}$; otherwise, we add an edge between $v$ and the ``dummy'' vertex $q$. Note that we need the extra vertex $q$ to make $\G_{k+1}$ regular. 

Recall that $(k+1)n$ colours, including repetitions, are randomly assigned to the $(k+1)n$ edges of $G_{k+1,q}$. Hence, $\G_{k+1}$ is distributed as a random $(k+1)$-regular bipartite (multi)graph (where multiple edges are allowed but not loops). Indeed, it fits the bipartite configuration model of Bollob\'as~\cite{Bol}. Each vertex could be replaced by a distinct set of $(k+1)$ points, each colour $c \in Q$ naturally corresponds to $(k+1)$ points associated with non-special copies of that colour, and the ``dummy'' vertex $q$ corresponds to $(k+1)$ points associated with special copies of popular colours. Then, we randomly pair these points to get the colouring of the edges of $G_{k+1,q}$. Note that $\G_{k+1}$ contains no information about the actual vertex choices of edges in $G_{k+1,q}$, only their colour. Informally, we can think of each edge of $\G_{k+1}$ being associated with a box containing a random vertex from $[n]$. We do not need to open these boxes for what is next.

We will use the fact that $\G_{k+1}$ is contiguous to the sum of $(k+1)$ independent random perfect matchings---see the survey on random regular graphs by Wormald~\cite{worm}. If we delete one of these matchings, then we obtain a random $k$-regular bipartite graph contiguous to $\G_k$. Arguing as before, we observe that $\G_k$ may be viewed as a random assignment of $kn$ colours, including repetitions, to the $kn$ edges of $G_{k,q}$. ``Opening'' the boxes on each edge of $\G_k$ gives us $G_{k,q}$, which by assumption has a required rainbow structure w.h.p. So, using the above coupling we conclude that $G_{k+1,q}$ also has a rainbow structure w.h.p.

\section{Rainbow Spanning Trees}

In view of the results in Sections~\ref{sec:monotonicity} and~\ref{sec:degmonotonicity}, we will assume that $k=2$ and $q=n-1$ for this section.

To establish the existence of a \RST\ to prove Theorem~\ref{th1}, we will use the result of Edmonds~\cite{E} on the matroid intersection problem. A finite \emph{matroid} $M$ is a pair $(E,\mathcal{I})$, where $E$ is a finite set (called the \emph{ground set}) and $\mathcal{I}$ is a family of subsets of $E$ (called the \emph{independent sets}) with the following properties:
\begin{itemize}
\item $\emptyset \in \mathcal{I}$,
\item for each $A' \subseteq A \subseteq E$, if $A \in \mathcal{I}$, then $A' \in \mathcal{I}$ (\emph{hereditary property}),
\item if $A$ and $B$ are two independent sets of $\mathcal{I}$ and $A$ has more elements than $B$, then there exists an element in $A$ that when added to $B$ gives a larger independent set than $B$ (\emph{augmentation property}).
\end{itemize}
A maximal independent set (that is, an independent set which becomes dependent on adding any element of $E$) is called a \emph{basis} for the matroid. An observation, directly analogous to the one of bases in linear algebra, is that any two bases of a matroid $M$ have the same number of elements. This number is called the \emph{rank} of $M$. For more details on matroids see, for example,~\cite{Oxley}.

In this scenario, $M_1,M_2$ are matroids over a common ground set $E$ with rank functions $r_1,r_2$, respectively. Edmonds' general theorem shows that 
\begin{equation}
\label{eq:Edmonds}
\max (|I|:I \mbox{ is independent in both matroids}) = \min_{{E_1\cup E_2=E\atop E_1\cap E_2=\emptyset}}(r_1(E_1)+r_2(E_2)),
\end{equation}
where $r_i(E_i)$ is the rank of the matroid induced by $E_i$.

In our application, the common ground set $E$ is the set of coloured multi-edges of $G_{k,q}$. $M_1$ is the cycle matroid of the graph $G_{k-out}$; that is, $S \subseteq E$ is independent in $M_1$ if $S$ induces a graph with no cycle (colours are ignored, two parallel edges are considered to be a cycle of length 2). Hence, for every $S \subseteq E$ we have $r_1(S) = n - \kappa(S)$, where $\kappa(S)$ is the number of components of the graph $G_S=([n],S)$ induced by $S$. $M_2$ is the partition matroid associated with the colours; that is, $S \subseteq E$ is independent in $M_2$ if $S$ has no two edges in the same colour. This time, for every $S \subseteq E$ we have that $r_2(S)$ is the number of distinct colours occurring in $S$. We use Edmonds' theorem to get the following useful observation that has been used a number of times in related contexts. In temporal order, it was used by Fenner and Frieze \cite{FF1}, Frieze and McKay \cite{FM} and by Suzuki \cite{Suz}.
\begin{lemma}
Let $G=(V,E)$ be a multigraph in which each edge is coloured with a colour from a set $Q$. A~necessary and sufficient condition for the existence of a \RST\ is that
\begin{equation}
\label{eq:condition}
\k (C_I)\leq |Q|+1-|I| \hspace{1in} \mbox{for all }I\subseteq Q,
\end{equation}
where $C_I \subseteq E$ is the set of edges of colour from set $I$.
\end{lemma}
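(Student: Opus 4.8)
The plan is to apply Edmonds' matroid intersection formula \eqref{eq:Edmonds} to the two matroids $M_1$ (the cycle matroid) and $M_2$ (the colour partition matroid) described just above the lemma. A rainbow spanning tree is exactly a set of $n-1$ edges that is independent in both matroids: independence in $M_1$ means it is a forest, and having $n-1$ forest edges makes it a spanning tree; independence in $M_2$ means all colours are distinct, i.e.\ rainbow. So a \RST\ exists if and only if the maximum common independent set has size $n-1$. By \eqref{eq:Edmonds} this is equivalent to
\[
\min_{E_1 \cup E_2 = E,\; E_1 \cap E_2 = \es} \big(r_1(E_1) + r_2(E_2)\big) \geq n-1.
\]
The bulk of the proof will be to show that this min-condition is equivalent to the stated condition \eqref{eq:condition}.

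\textbf{Reduction to partitions indexed by colour classes.} First I would argue that in evaluating the minimum it suffices to restrict attention to partitions $(E_1,E_2)$ in which $E_2$ is a union of entire colour classes. Suppose $E_2$ omits some edge $e$ of a colour that already contributes to $r_2(E_2)$; moving $e$ from $E_1$ to $E_2$ cannot increase $r_2(E_2)$ (the colour is already counted) and cannot increase $r_1(E_1)$ (removing an edge from a set cannot raise its cycle-matroid rank). Conversely one can push edges so that each colour sits entirely in $E_1$ or entirely in $E_2$ without increasing the objective. Thus the optimal split is determined by a subset $I \subseteq Q$ of colours, with $E_2 = C_I$ (all edges whose colour lies in $I$) and $E_1 = E \sm C_I$ (all remaining edges). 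For such a split, $r_2(E_2) = |I|$ since $E_2$ contains exactly the $|I|$ distinct colours of $I$, and $r_1(E_1) = n - \k(E \sm C_I)$.

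\textbf{Translating the rank of the complementary graph.} With $E_2 = C_I$ the objective becomes $r_1(E \sm C_I) + |I| = n - \k(E \sm C_I) + |I|$, so the min-condition $\geq n-1$ reads $\k(E \sm C_I) \le |I| + 1$ for all $I \subseteq Q$. To match \eqref{eq:condition} I then pass to the complementary colour set: writing $J = Q \sm I$, deleting the edges of colours outside $J$ is the same as keeping only edges of colour in $I$, so the graph $G_{E \sm C_J}$ has edge set $C_I$. Relabelling, the condition "$\k(E\sm C_I) \le |I|+1$ for all $I$" is exactly "$\k(C_J) \le |Q| - |J| + 1$ for all $J \subseteq Q$", which is precisely \eqref{eq:condition}. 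The necessity direction is the easy observation that any \RST\ uses at most one edge per colour, so the forest it induces on the edges of colour set $I$ has at most $|I|$ edges and hence leaves the vertex set in at least $n - |I|$ tree-components; intersecting with the spanning-tree structure yields $\k(C_I) \le |Q|+1-|I|$.

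\textbf{Anticipated obstacle.} The genuinely substantive step is the reduction to colour-class-aligned partitions, i.e.\ justifying that it is never advantageous to split a single colour class across $E_1$ and $E_2$ and that the partition-matroid rank $r_2(E_2)$ collapses to $|I|$. One must handle this carefully because $r_2$ counts \emph{distinct} colours rather than edges, so the interaction between the two rank functions needs a clean monotonicity argument; the cycle-matroid side, $r_1(S) = n - \k(S)$, is standard and poses no difficulty. Once the optimal split is shown to be of the form $(E \sm C_I, C_I)$, the remaining manipulation is purely a bookkeeping translation between $I$ and its complement.
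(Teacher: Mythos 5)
Your proposal is correct and takes essentially the same route as the paper: Edmonds' formula \eqref{eq:Edmonds} applied to the cycle matroid and the colour partition matroid, followed by the identical monotonicity argument (pulling every edge whose colour already appears in $E_2$ into $E_2$, which preserves $r_2$ and cannot increase $r_1$) to reduce the minimization to colour-class-aligned partitions $(C_I, C_{Q\sm I})$, and then the bookkeeping translation to \eqref{eq:condition}. The only blemish is your closing ``necessity'' aside, which is both redundant (the Edmonds equivalence already gives both directions) and slightly misstated---the tree has at least $n-1-(|Q|-|I|)$ edges of colour in $I$, giving an \emph{upper} bound of $|Q|+1-|I|$ on the components of that forest---but this does not affect the argument.
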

\begin{proof}
Clearly, $G$ has a rainbow spanning tree if and only if $G$ contains a set $S$ of coloured edges of size $|V|-1$ such that $S$ is independent both in $M_1$ ($S$ induces a spanning tree) and in $M_2$ ($S$ is rainbow). Since no set of size at least $|V|$ is independent in $M_1$, the necessary and sufficient condition is that the right side of~(\ref{eq:Edmonds}) is at least $|V|-1$. Hence, the desired condition is that for every partition of the edge set $E$ into $E_1$ and $E_2$ we have $r_1 (E_1) + r_2 (E_2) \ge |V|-1$.

Let us fix a partition of $E$ into $E_1$ and $E_2$. Let $J \subseteq Q$ be the set of colours occurring in $E_2$, $E'_2 \subseteq E$ be the set of edges coloured with a colour from $J$, and $E'_1 = E \setminus E'_2$. Clearly, $(E'_1, E'_2)$ is also a partition of $E$, $E_2 \subseteq E'_2$ and so $E'_1 \subseteq E_1$, and $r_2(E_2) = r_2(E'_2) = |J|$. Moreover, since $E'_1 \subseteq E_1$, $r_1(E'_1) \le r_1(E_1)$ and so
$$
r_1 (E_1) + r_2 (E_2) \ge r_1 (E'_1) + r_2 (E'_2).
$$
Therefore, without loss of generality, we may restrict ourselves to sets $E_2$ containing all edges of colour from some set $J \subseteq Q$ and then take $I = Q \setminus J$ (that is, $E_2 = C_J$ and $E_1 = C_I$). The condition to verify is the following:
$$
|V|-1 \le r_1 (E_1) + r_2 (E_2) = (|V| - \kappa(C_I)) + (|Q|-|I|)
$$
which is equivalent to~(\ref{eq:condition}). The proof of the lemma is finished. 
\end{proof}

Recall that $k=2$, $q = n-1$, and so $\rho = k = 2$. For a given $\ell \in [q] \cup \{0\}$, we define the event 
$$
{\cal A}_\ell=\{ \exists I\subseteq Q, |I|=\ell: \k (C_I) \geq q-|I|+2\}.
$$ 
Trivially, ${\cal A}_0$ cannot occur. Since each colour of $Q$ is used at least once, ${\cal A}_1$ cannot occur as well: if $|I|=1$, then $\k (C_I) \le n-1$ but $q-|I|+2 \ge n$. With some additional work we can eliminate ${\cal A}_2$ and ${\cal A}_3$ as well. Note that in $G_{k-out}$, the probability that a vertex $v$ chooses $u$ as a neighbour and vice versa is $O(1/n^2)$. Hence by linearity of expectation, the expected number of multiple edges in $G_{k-out}$ is $O(1)$. The probability that both edges in a multiple edge receive the same (fixed) colour in $G_{k,q}$ is $O(1/q^2)$ and so the expected number of monochromatic multiple edges in $G_{k,q}$ is $O(1/q)=o(1)$. It follows from the first moment method that w.h.p.\ there are no monochromatic multiple edges. Since we aim for a statement that holds w.h.p., we may assume that this property is satisfied. We get that if $|I|=2$, then $\k (C_I) \le n-2$ but $q-|I|+2 \ge n-1$. Similarly, the expected number of triples of colours such that there are two multiple edges that use only these colours is $O(1/q)=o(1)$. Hence, we may assume that for any $I$ of size 3, $|C_I| \ge 5$ and so $\k (C_I) \le n-3$ whereas $q-|I|+2 \ge n-2$. Finally, note that ${\cal A}_{q}$ cannot occur w.h.p.\ since $G_{k-out}$ is connected w.h.p.

We know that if there is no \RST, then ${\cal A}_\ell$ occurs for some $\ell\in [4,q-1]$. Let us concentrate on a minimal $\ell$, corresponding set $I$, and let $S=C_I$. Let us start with the following simple but useful observation.

\begin{claim}
\label{bridges}
$G_S$ has no bridges.
\end{claim}
\begin{proof}
If there is a bridge in $G_S$, then we simply remove it and all edges of the same colour. The number of components increases by at least one and the number of colours decreases by one. Clearly, ${\cal A}_{\ell-1}$ occurs, contradicting the minimality of $\ell$.
\end{proof}

Recall that $\k(S) \ge q-\ell+2 = n-\ell+1$. Suppose then that $G_S$ has $i$ isolated vertices and $n-\ell+x-i$ non-trivial components for some integer $x\geq 1$. Let $T$ be the set of vertices in the non-trivial components of $G_S$. By Claim~\ref{bridges}, $G_S$ has no bridges. Since non-trivial components without bridges have at least three vertices,
\begin{equation}
\label{eq3}
i+3(n-\ell+x-i)\leq n
\end{equation}
which gives us that 
$$
|T| = n - i \le {3 \over 2} (\ell - x) \le {3 \over 2} (\ell - 1).
$$
Let ${\cal B}_\ell$ denote the event 
$$\begin{array}{ll}
\{ \exists I\subseteq Q, |I|=\ell,T\subseteq [n]:& t=|T|\leq 3(\ell-1)/2, \\
	&	\mbox{all edges coloured with $I$ are contained in $T$}, \\ 
	&	\mbox{there are $u\geq \max \{ \rho \ell,t\}$ $I$-coloured edges} \}.
             \end{array} $$
(Here $u\geq t$ because we are dealing with bridgeless components and $u\geq \r\ell$ as each colour appears at least $\r$ times.) Clearly, 
\begin{equation}
\label{eq4}
{\cal A}_\ell \subseteq {\cal B}_\ell \hspace{.5in} \mbox{ for }\ell\geq 4,
\end{equation}
and we will bound the probability of ${\cal B}_\ell$ to deal with small values of $\ell$, that is, for $\ell \le n/20$.

\subsection{$4\leq \ell\leq n/20$}

Recall that $k=2$, $q=n-1$, and $\r=\rdown{ kn/q } = \rdown{ kn/(n-1) } = 2$. There are $q_1=q-k=n-1-k=n-3$ unpopular colours that appear $k=2$ times and $q_2=k=2$ popular colours that appear $k+1=3$ times.

Note that 
\begin{align*}
\Pr({\cal B}_\ell) & \leq \sum_{t=1}^{3(\ell-1)/2}\binom{n}{t}\sum_{\ell_1+\ell_2=\ell}\binom{q_1}{\ell_1}\binom{q_2}{\ell_2}\binom{tk}{k\ell+\ell_2} \\
& \qquad \times \bfrac{t}{n}^{k\ell+\ell_2} \frac{(k\ell+\ell_2)!}{(kn)(kn-1) \cdots (kn-(k\ell+\ell_2-1))},
\end{align*}
where the second sum is taken over all integers $0 \le \ell_1 \le q_1$, $0 \le \ell_2 \le q_2$ such that $\ell_1+\ell_2=\ell$.
Indeed, in order to estimate $\Pr({\cal B}_\ell)$, we consider all possibilities for $t=|T| \le 3(\ell-1)/2$ (the first sum) and all sets of size $t$ (the term ${n\choose t}$).
We independently consider sets of colours $I \subseteq Q$ with $\ell_1$ unpopular colours and $\ell_2$ popular ones (the second sum). Then, we need to select specific colours (the term $\binom{q_1}{\ell_1}\binom{q_2}{\ell_2}$). Since all edges coloured with $I$ are contained in $T$, we need to select which of the $tk$ edges of $G_{k-out}$ generated by vertices of $T$ are coloured with $I$ (the term $\binom{tk}{k\ell+\ell_2}$). The selected edges need to stay within $T$ (the term $\bfrac{t}{n}^{k\ell+\ell_2}$) and be coloured with $I$ (the last term).

Clearly, $\binom{tk}{k\ell+\ell_2} \le (tk)^{k\ell+\ell_2}/(k\ell+\ell_2)!$. Moreover, since $\ell \le n/20$ and $\ell_2 \le q_2 = k$, 
\begin{eqnarray*}
(kn)(kn-1) \cdots (kn-(k\ell+\ell_2-1)) &\ge& (kn)^{k\ell+\ell_2} \left( 1 - \frac {k\ell+\ell_2}{kn} \right)^{k\ell+\ell_2} \\
&\ge& (kn)^{k\ell+\ell_2} \left( \frac {19}{20} + o(1) \right)^{k\ell+\ell_2} \\
&=& (kn)^{k\ell+\ell_2} \left( \sqrt{\frac {20}{19}} + o(1) \right)^{-2 (k\ell+\ell_2)} \\
&\ge& (kn)^{k\ell+\ell_2} \ 1.03^{-2 (k\ell+\ell_2)}.
\end{eqnarray*}
We get that
\begin{eqnarray*}
\Pr({\cal B}_\ell) &\leq& \sum_{t=1}^{3(\ell-1)/2}{n\choose t}\sum_{\ell_1+\ell_2=\ell}\binom{q_1}{\ell_1}\binom{q_2}{\ell_2} (tk)^{k\ell+\ell_2} \bfrac{t}{n}^{k\ell+\ell_2} \frac{1.03^{\, 2 (k\ell+\ell_2)}}{(kn)^{k\ell+\ell_2}} \\
&=& \sum_{t=1}^{3(\ell-1)/2}{n\choose t}\sum_{\ell_1+\ell_2=\ell}\binom{q_1}{\ell_1}\binom{q_2}{\ell_2} \left( \frac{1.03 t}{n} \right)^{2(k\ell+\ell_2)}. 
\end{eqnarray*}
Since $\binom{q_1}{\ell_1}\binom{q_2}{\ell_2} \le \binom{q}{\ell} \le \binom{n}{\ell} \le (ne/\ell)^{\ell}$, $\binom{n}{t} \le (ne/t)^t$, and $2(k\ell+\ell_2) \ge 2 k \ell = 4 \ell$, we get
\begin{eqnarray*}
\Pr({\cal B}_\ell) &\leq& (\ell + 1) \sum_{t=1}^{3(\ell-1)/2} \left( \frac {ne}{t} \right)^t \left( \frac {ne}{\ell} \right)^{\ell} \left( \frac{1.03 t}{n} \right)^{4 \ell}. 
\end{eqnarray*}
Note that since the ratio between the $(t+1)$-st and the $t$-th term is
$$
\frac {ne}{t+1} \left( \frac {t+1}{t} \right)^{4\ell-t} \ge \frac {ne}{t+1} \ge 2,
$$
the above sum is of the order of its last term. We get that
\begin{eqnarray*}
\Pr({\cal B}_\ell) &=& O \left( \ell \left( \frac {ne}{3\ell/2} \right)^{3\ell/2} \left( \frac {ne}{\ell} \right)^{\ell} \left( \frac{1.03 \cdot 3\ell/2}{n} \right)^{4 \ell} \right) \\
&=& O \left( \ell \left( \frac {2}{3} \cdot e^{5/3} \cdot 1.545^{8/3} \ \frac {\ell}{n} \right)^{3\ell/2} \right) \\
&=& O \left( \ell \left( \frac {12 \, \ell}{n} \right)^{3\ell/2} \right).
\end{eqnarray*}\
Clearly, $\sum_{\ell=4}^{n/20}\Pr({\cal B}_\ell) = O(n^{-6}) = o(1)$, since the sum is dominated by its first term.

\subsection{$n/20 < \ell \leq n-\frac{5001 n \log\log n}{\log n}$}

We first bound the number of pairwise vertex disjoint cycles in $G_{k-out}$, including loops (cycles of length 1) and parallel edges (cycles of length 2). In our application, we concentrate on $k=2$ but the following bound holds in general.

\begin{lemma}\label{cycles}
Fix any integer $k \ge 2$. W.h.p.\ no family of pairwise vertex disjoint cycles in $G_{k-out}$ consists of more than $3n\log (2k) /\log n$ cycles. 
\end{lemma}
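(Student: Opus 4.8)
The plan is to bound the expected number of families of $m$ pairwise vertex-disjoint cycles and then apply the first moment method, choosing the threshold so that this expectation is $o(1)$ once $m$ exceeds $3n\log(2k)/\log n$.

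First I would set up the counting. A family of $m$ pairwise vertex-disjoint cycles spans some set of vertices, partitioned into the cycles; the key structural fact about $G_{k-out}$ is that each vertex is the tail of exactly $k$ chosen edges (directed ``outward'' from the chooser), and each such edge lands on a uniformly random other vertex, with probability roughly $1/n$ of hitting any prescribed target. If a family of disjoint cycles uses a total of $s$ vertices across $m$ cycles, then it contains exactly $s$ undirected edges (each cycle on $c_j$ vertices contributing $c_j$ edges, and $\sum_j c_j = s$), but loops and parallel edges must be handled as cycles of length $1$ and $2$. Each edge of the family must be ``generated'' by one of its endpoints choosing the other, an event of probability $O(1/n)$, and these choices are essentially independent across distinct generating slots in the configuration-style exposure. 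So I would write, schematically,
\begin{equation*}
\E(\#\text{families of } m \text{ disjoint cycles}) \le \sum_{s \ge m} \binom{n}{s} \cdot (\#\text{ways to partition } s \text{ labelled vertices into } m \text{ cyclic orders}) \cdot (2k)^{s} \cdot \left(\frac{C}{n}\right)^{s},
\end{equation*}
where the factor $(2k)^s$ accounts for the choice, at each of the $s$ edges, of which endpoint generated it and which of its $k$ out-slots was used, and $(C/n)^s$ bounds the probability that all $s$ prescribed adjacencies actually occur. The crucial cancellation is that $\binom{n}{s}$ times the number of ways to arrange $s$ vertices into cycles is at most $n^s \cdot s! / (\text{symmetry}) \le n^s \cdot s^{O(1)} / \ldots$ — more precisely the labelled count of partitions into cyclic sequences is bounded by $s!$, and $\binom{n}{s} s! \le n^s$, so the $n^{-s}$ from the probability cancels the $n^s$, leaving a bound of the form $(2kC)^s$ times lower-order factors, summed over $s$.

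The heart of the argument is then an \emph{entropy/counting} estimate showing that having many cycles forces $s$ to be large while the per-vertex cost stays bounded, so that the surviving $(O(k))^s$-type factor is beaten by a genuinely small factor coming from the number of cycles. The right way to see the threshold $3n\log(2k)/\log n$ is to note that each cycle uses at least one vertex (a loop) but a family of $m$ disjoint cycles using few vertices is severely constrained; the dominant contribution comes from loops and double edges, where $s$ is smallest relative to $m$. For such a family the expected count behaves like $\binom{n}{m}(2k/n)^{2m}$-type terms (each short cycle costing two generating slots and two near-$1/n$ probabilities against roughly $n$ vertex choices), giving a bound of order $\big(n^{-1}(2k)^{2} e^{O(1)}\big)^{m}$ up to polynomial corrections; taking logarithms, this is $o(1)$ precisely when $m \log n$ outgrows $m \log(2k)^{2} = 2m\log(2k)$ with enough room, which is guaranteed once $m > 3n\log(2k)/\log n$. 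I would make the constant $3$ explicit by checking that $\sum_{m > 3n\log(2k)/\log n}$ of these terms is summable to $o(1)$, using that the generic term decays geometrically once $m$ passes the threshold.

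The main obstacle I anticipate is bookkeeping the cycle-length distribution cleanly: short cycles (loops and parallel edges) are the extremal case because they maximize the number of cycles per vertex used, and I must verify that longer cycles only help (they use more vertices per cycle, hence fewer cycles for a given vertex budget, and incur more $1/n$ factors). I would therefore first prove the bound in the worst case of all-short cycles and then argue monotonically that mixing in longer cycles cannot increase the expected family count beyond this extremal estimate — formally, by showing the generating-function/summand is maximized coordinatewise when cycle lengths are as small as possible. A secondary technical point is justifying the independence of the $s$ adjacency events to the needed accuracy; since at most $O(1)$ multiple edges occur w.h.p.\ (as already established in the excerpt) and the $k$-out choices are independent across vertices, the product bound $(C/n)^s$ with a fixed constant $C$ holds after a routine union-bound correction, which I would state but not belabor.
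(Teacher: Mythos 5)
Your approach has a genuine gap, and it sits exactly where you placed ``the heart of the argument.'' A first-moment bound on the number of families of $m$ pairwise disjoint cycles of \emph{unrestricted} length cannot work, because that expectation is enormous for every relevant $m$: after the cancellation you correctly identify ($\binom{n}{s}\,s!\le n^s$ against the $(1/n)^s$ probability), each cycle of length $c$ contributes a net factor of roughly $(2k)^c/(2c)$, which is \emph{increasing} in $c$ --- the extra $1/n$ per edge is exactly paid for by the extra factor of $n$ from choosing one more vertex, with a surplus of $2k>1$ per vertex. Hence the sum over length profiles is dominated by families containing one very long cycle; for instance, one cycle of length $n-2(m-1)$ together with $m-1$ parallel edges contributes on the order of $(2k)^{n-2m}$, reflecting the fact that $G_{k-out}$ genuinely has exponentially many long cycles in expectation (it is even Hamiltonian w.h.p.\ for $k\ge 3$). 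So your proposed extremal principle (``loops and double edges dominate; longer cycles only help'') is exactly backwards, and the coordinatewise monotonicity check you plan to do would come out the wrong way. Relatedly, your derivation of the threshold is not coherent: the condition that ``$m\log n$ outgrows $2m\log(2k)$'' holds for \emph{every} $m\ge 1$ once $n>(2k)^2$, so it cannot single out $m>3n\log(2k)/\log n$.

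The missing idea is that disjointness must be exploited \emph{deterministically}, not through the first moment. The paper splits at $\om=\frac{\log n}{2\log(2k)}$: cycles of length greater than $\om$ are pairwise vertex disjoint, so there are trivially at most $n/\om = 2n\log(2k)/\log n$ of them in any such family, with probability 1 --- this is the only place disjointness enters, and no probabilistic estimate is needed there. For cycles of length at most $\om$, disjointness is dropped entirely: the expected total number of short cycles (overlapping or not) is at most $\sum_{s\le\om}(2k)^s/(2s)=O\brac{(2k)^{\om}/\om}=o(\sqrt n)$, and Markov's inequality gives at most $n\log(2k)/\log n$ of them w.h.p.; adding the two bounds produces the constant $3$. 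Any repair of your argument needs some such length truncation \emph{before} taking expectations; without it, no choice of $m$ in the feasible range makes the expected number of families $o(1)$.
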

\begin{proof}
Let $\om=\frac{\log n}{2 \log (2k)}$. Let $Z$ denote the number of cycles of length at most $\om$ in $G_{k-out}$ (possibly overlapping). There are $\binom{n}{s}\frac{s!}{2s} = \binom{n}{s}\frac{(s-1)!}{2}$ cycles of length $s$ that one can form out of $n$ labelled vertices. For a given pair of vertices, the probability that there is an edge between them is clearly at most $\frac {2k}{n}$. Hence,
\[
\E\brac{Z}\leq \sum_{s=1}^\om\binom{n}{s}\frac{(s-1)!}{2}\bfrac{2k}{n}^s\leq \sum_{s=1}^\om\frac{(2k)^s}{2s}.
\]
Since the ratio between the two consecutive terms ($(s+1)$st and $s$th) is $(2k)s/(s+1) \ge k \ge 2$,
\[
\E\brac{Z}\leq \frac{(2k)^\om}{\om}= o \left( \exp\left( \frac {\log n}{2 \log (2k)} \log (2k) \right) \right) = o(\sqrt{n}).
\]
The Markov inequality implies that $Z\leq n\log (2k) / \log n$ w.h.p. 

On the other hand, there are trivially at most $n/\om = 2n\log (2k) / \log n$ pairwise vertex disjoint cycles of length greater than $\om$, and the lemma follows. 
\end{proof}

For the next property we need, we assume that $k=2$. As in Section~\ref{sec:degmonotonicity}, let $\G_2$ be the bipartite (multi)graph with vertex sets $[n]$ and $Q' = Q \cup \{q\}$, where $q$ is a ``dummy'' vertex associated with special copies of popular colours. There is an edge $vc$ in $\G_2$ if one of $v$'s {\em choices} has non-special copy of colour $c$; as before, an edge $vq$ occurs in $\G_2$ if one of $v$'s choices is one of the two special copies of popular colours. $\G_2$ is a random 2-regular bipartite graph. Any 2-regular graph is a collection of cycles. We will use a well-known and easy to prove fact that w.h.p.\ there are not too many cycles in $\G_2$. For completeness, we provide an elementary proof.

\begin{lemma}\label{lem:cycles_G2}
$\G_2$ contains at most $(\log n)^2$ cycles w.h.p.
\end{lemma}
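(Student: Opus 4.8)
```latex
\textbf{Proof proposal.}
The plan is to bound the number of cycles in the random $2$-regular bipartite graph $\G_2$ by a direct first-moment computation, analogous to the argument in Lemma~\ref{cycles} but now carried out in the bipartite configuration model on vertex sets $[n]$ and $Q'$, each of size $n$. Since $\G_2$ is $2$-regular, every cycle has even length $2s$ for some $s\ge 1$, alternating between the two sides, and uses $s$ vertices from $[n]$ and $s$ from $Q'$. Let $Z_s$ count the cycles of length exactly $2s$ in $\G_2$ (allowing the degenerate short cycles coming from multiple edges, i.e.\ $s=1$). The total number of cycles is $Z=\sum_{s\ge 1} Z_s$, and the goal is to show $\E(Z)=o((\log n)^2)$ and then invoke Markov.

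First I would estimate $\E(Z_s)$. The number of ways to choose the $s$ ordered-up-to-rotation-and-reflection alternating vertices is at most $\binom{n}{s}\binom{n}{s}\frac{(s!)^2}{2s}$ (choose the two $s$-subsets, arrange them cyclically, divide by the $2s$ symmetries of a cycle of length $2s$). For a prescribed alternating cyclic sequence to actually form a cycle, each of the $2s$ required adjacencies must be realized by the random pairing of configuration points. In a $2$-regular bipartite configuration model on $n+n$ points per side (with $2n$ points total on each side), the probability that a prescribed point on one side is matched to a prescribed point on the other is $\Theta(1/n)$, and the $2s$ matching events can be bounded using falling factorials in the denominator exactly as in the non-bipartite case. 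This yields a bound of the shape
\[
\E(Z_s) \;\le\; \binom{n}{s}^2 \frac{(s!)^2}{2s}\,\left(\frac{2}{n}\right)^{2s}\cdot (1+o(1)) \;\le\; \frac{C}{s}
\]
for an absolute constant $C$, uniformly over $s$ in the relevant range, because the $\binom{n}{s}^2 (s!)^2 \le n^{2s}$ factor cancels against $(2/n)^{2s}$ up to a constant base. Summing a harmonic-type series $\sum_{s=1}^{s_{\max}} C/s$ over cycles up to length, say, $\om'=\tfrac{\log n}{4}$ gives $O(\log n)$, which is comfortably $o((\log n)^2)$, and Markov then gives $Z \le (\log n)^2$ w.h.p.

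The main subtlety, and where I would spend the most care, is the treatment of \emph{long} cycles together with the exact constant in the per-edge matching probability of the bipartite configuration model. The clean harmonic bound above controls short and moderate cycles, but cycles whose length is a constant fraction of $n$ are not governed by the small-$s$ expansion; here one either truncates (arguing, as in the proof of Lemma~\ref{cycles}, that there can be at most $n/\om'$ vertex-disjoint long cycles, and then observes that in a $2$-regular graph the cycles are automatically vertex disjoint so their total count is at most $n/\om'$ anyway) or checks that the summand $\E(Z_s)$ stays summable across the whole range. In fact, since $\G_2$ is exactly a disjoint union of cycles, the total number of cycles is at most the number of cycles of length $\le \om'$ plus $n/\om'$, and the latter is $o((\log n)^2)$ is false—so I would instead note that long cycles contribute few in number precisely because each consumes many vertices: with at most $2n$ vertices total, a family of cycles each of length exceeding $\om'$ has size at most $2n/\om' = O(n/\log n)$, which overshoots the target. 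Consequently the real content is that \emph{all} cycles are short w.h.p.: the first-moment bound must show $\E(\#\{\text{cycles of length} \ge \om'\}) = o(1)$, so that w.h.p.\ every cycle has length below $\om'$ and the harmonic bound caps their number at $O((\log n)^2)$. Verifying that $\sum_{s \ge \om'/2} \E(Z_s)$ is $o(1)$ is the crux, and it follows from the geometric decay of $\E(Z_s)$ once the constant base in $\binom{n}{s}^2(s!)^2(2/n)^{2s}$ is pinned down below $1$; I expect this constant-chasing in the configuration model to be the one genuinely delicate step.
```
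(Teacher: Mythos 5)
Your overall strategy (first moment on cycle counts, then Markov) is salvageable, but the step you yourself single out as the crux --- proving $\E\brac{\#\set{\text{cycles of length}\ge \om'}}=o(1)$ for $\om'=\tfrac{\log n}{4}$, i.e.\ that w.h.p.\ \emph{all} cycles of $\G_2$ are short --- is not merely hard: it is false, and it even contradicts the lemma you are trying to prove. Since $\G_2$ is $2$-regular on $2n$ vertices, its cycles \emph{partition} the vertex set; so if every cycle had length less than $\om'$, the number of cycles would be at least $2n/\om'=8n/\log n\gg (\log n)^2$. In other words, the event ``all cycles are short'' is contained in the event ``$Z>(\log n)^2$'', so your crux and the lemma's conclusion cannot both hold, and since the lemma is true your crux claim fails w.h.p. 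The underlying reason is that $\E(Z_s)$ decays harmonically, not geometrically: in the bipartite configuration model the exact computation is
\[
\E(Z_s)=\binom{n}{s}^2\frac{s!\,(s-1)!}{2}\cdot 4^s\cdot\frac{(2n-2s)!}{(2n)!}
=\frac{1}{2s}\prod_{j=0}^{s-1}\frac{2n-2j}{2n-2j-1}\ \ge\ \frac{1}{2s},
\]
where $4^s$ counts the assignments of each cycle vertex's two points to its two incident cycle edges and $\frac{(2n-2s)!}{(2n)!}$ is the probability that the $2s$ required point-pairs all occur in the pairing. Hence $\sum_{s\ge \om'}\E(Z_s)\ge\sum_{s=\om'}^{n}\frac{1}{2s}=\Theta(\log n)$, not $o(1)$; long cycles are typically present. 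Note also that your intermediate bound does not actually give $\E(Z_s)\le C/s$: with the per-edge probability $2/n$ that you quote, the estimate is $\binom{n}{s}^2\frac{(s!)^2}{2s}\brac{2/n}^{2s}\le \frac{4^s}{2s}$, and that spurious $4^s$ is cancelled only by the exact falling-factorial probability displayed above --- this is precisely the ``constant-chasing'' you flagged, and your sketch does not carry it out.

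The repair is to drop the truncation entirely rather than defend it: the displayed formula gives $\E(Z_s)=O\brac{\frac{1}{s}\sqrt{n/(n-s+1)}}$ uniformly for $1\le s\le n$, so summing over the \emph{whole} range yields $\E(Z)=O(\log n)$ (the terms with $s$ close to $n$ contribute $O(1)$ in total), and Markov then gives $Z\le(\log n)^2$ w.h.p. This is a valid version of your approach, but it is worth comparing with the paper's proof, which avoids enumerating cycles by length altogether: it reveals the configuration-model pairing edge by edge along the cycles and observes that, regardless of the history, the probability of closing a cycle at the $i$th step is exactly $1/(2n-2i+1)$; by linearity, $\E(Z)=\sum_{i=1}^{n}\frac{1}{2n-2i+1}=\frac12\ln n+O(1)$, and Markov finishes. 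Your instinct that something delicate happens for long cycles was half right --- the crude per-edge bound does break down there --- but the resolution is to compute the expectation exactly (or via the paper's exploration process) and sum over all lengths, not to prove that long cycles are absent, because they are not.
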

\begin{proof}
Let $Z$ denote the number of cycles in $\G_2$. We will use the configuration model of Bollob\'as \cite{Bol} to estimate $\E (Z)$; see also Chapter~11 of Frieze and Karo\'nski~\cite{FK}. Let us select any vertex $v$ and any of the two points in it. We expose the other endpoint of that edge associated with vertex $y$, and move on to the other point associated with $y$. We continue the process until the first cycle is discovered. Then, we select any other point that is not matched yet and continue from there. The probability of closing a cycle at $i$th step of this process is precisely $1/(2n-2i+1)$. Indeed, $2(i-1)$ points are matched before step $i$ and one point is considered at $i$th step, so there are $2n-2i+1$ points available to be matched with the considered point to form an edge; only one of these points (namely, the one associated with vertex $v$ we start with) closes the cycle. We get that
$$
\E (Z) = \sum_{i=1}^n \frac {1}{2n-2i+1} = \sum_{j=1}^{2n} \frac {1}{j} - \sum_{j=1}^{n} \frac {1}{2j} = \ln(2n) - \frac {1}{2} \ln (n) + O(1) = \frac {1}{2} \ln n + O(1).
$$
Showing concentration of $Z$ around its expectation is easy but, since we aim for a slightly weaker result, we may trivially use the Markov inequality to get the desired bound that holds w.h.p.
\end{proof}

We continue concentrating on a minimal $\ell$ (in the range for $\ell$ considered in this section) for which ${\cal A}_\ell$ occurs and the corresponding set $I \subseteq Q$. We let $S=C_I$ and we expose $G_S=([n],S)$, the sub-graph induced by $S$. In particular, $\k(S) \ge n-\ell+1$ and we may assume that $G_S$ is bridgeless by Claim~\ref{bridges}. Let $X_0$ denote the number of isolated vertices of $G_S$. Since $G_S$ is bridgeless, each non-trivial component has a cycle. Hence, the number of non-trivial components, $\eta$, is bounded by the number of pairwise disjoint cycles. By Lemma~\ref{cycles}, we may assume that $\eta \le  \frac{3n\log (2k)}{\log n} \le \frac {5n}{\log n}$. It follows that 
\beq{p1}{
n-\ell+1 \leq \k(S)=X_0+\eta\text{ where }\eta\leq \frac{5n}{\log n}.
}

Almost all sets of colours $I$ yield a graph $G_S$ with many isolated vertices ensuring that~\eqref{p1} does not hold. Only a small fraction of possible sets of colours need special attention. We will use $\G_2$ to estimate how many different configurations we need to take care of. 

Let us concentrate on the set of edges incident with colours $I$ in $\G_2$, ignoring the two edges incident with a ``dummy'' vertex $q$. (Note that removing at most two edges from $G_S$ may only increase the number of isolated vertices, so $X_0 \le X_0'$, where $X_0'$ is the number of isolated vertices in $G_{S'}$; $S'$ is the set obtained from $S$ after removing the edges incident with special copies of popular colours (there are at most two such edges).) By Lemma~\ref{lem:cycles_G2}, this set of edges in $\G_2$ induces a graph $\G$ consisting of $r \leq (\log n)^{2}$ cycles and $s$ paths. The $2s$ endpoints of paths in $\G$ must be in $[n]$ and so each path corresponds to two distinct non-isolated vertices of $G_{S'}$. Let $x$ be the number of vertices in $[n]$ in $\G$ that are of degree 2 (vertices on cycles or internal vertices on paths); they also correspond to non-isolated vertices of $G_{S'}$. The remaining vertices in $\G$ are isolated and the corresponding vertices in $G_{S'}$ \emph{might} be isolated. Note that we did not expose edges yet only assigned colours. It is possible that after exposing edges such vertices will be chosen by other ones and become non-isolated. In any case, $X'_0 \le n - 2s - x$. By comparing the number of edges in $\G$ incident to $[n]$ with the number of edges incident to $I \subseteq Q$, we get $2 \cdot x + 1 \cdot (2s) = 2\ell$ so $x = \ell - s$. Combining the two observations together, we get $X'_0 \le n-2s-x=n-\ell-s$. 

If $s\geq \frac{5n}{\log n}$, then
\[
\k(G_S) = X_0 + \eta \leq X'_0 + \eta \le n -\ell - s + \eta \leq n-\ell-\frac{5n}{\log n}+\eta \le n-\ell,
\]
contradicting~\eqref{p1}.

Let us assume then that $s < \frac{5n}{\log n}$. Let $V_0 \subseteq [n]$ denote the set of vertices in $\G$ that are covered by paths and cycles, and let $V_1=[n]\setminus V_0$. As argued above, $|V_1| = n-\ell-s$ and so $|V_1| \geq \frac{5000 n \log\log n}{\log n}$ since $\ell \le n - \frac{5001 n \log\log n}{\log n}$ and $s < \frac{5n}{\log n} < \frac{n \log\log n}{\log n}$. Vertices in $V_0$ correspond to non-isolated vertices in $G_{S'}$. Vertices in $G_{S'}$ corresponding to vertices in $V_1$ that are isolated in $\G$ do not generate any random edges but it does not mean that they are isolated in $G_{S'}$. Let $V_2 \subseteq V_1$ denote the set of vertices that are incident with an $I$-coloured edge in $\G$ and are {\em chosen by some vertex in $V_0$}. It is important to notice that we have not conditioned on the other endpoints of the choices in~$\G$ (endpoints of random edges generated by vertices in $G_{S'}$ corresponding to vertices in $V_0$ in~$\G$). We may expose these $2 \ell \ge n/10$ edges now, one at a time, each time updating set $V_2$. Provided that $|V_2| \le |V_1| / 50$, we add a new vertex to $V_2$ with probability at least $(49 |V_1|/ 50)/n \ge |V_1|/(2n)$. Let $X \in \text{Bin}\left( n/10, |V_1| / (2n) \right)$ with $\E(X) = |V_1|/20 \ge \frac{250 n \log\log n}{\log n}$. The established coupling implies that
\[
\Pr\brac{ |V_2| \leq \frac{|V_1|}{50}} \leq \Pr\brac{ X \leq \frac{|V_1|}{50}} \le \Pr\brac{ X \leq \frac {\E (X)}{2} },
\] 
and so the Chernoff's bounds imply that
\[
\Pr\brac{|V_2|\leq \frac{ 100 n \log\log n}{\log n}} \leq \exp\left( - \frac {\E (X)}{12}\right) \le \exp\left( - \frac{ 20 n \log\log n}{\log n} \right).
\]
Now, we need to estimate the number of configurations we need to investigate. After orienting (arbitrarily) cycles in $\G_2$, there are at most $\binom{n}{s}$ choices for the beginnings and at most $\binom{n}{s}$ choices for the endings of paths. Such choices yield paths in $\G$. By Lemma~\ref{lem:cycles_G2}, there are at most $2^{(\log n)^2}$ choices to determine which cycles from $\G_2$ should stay in $\G$. Hence, the number of configurations (different bipartite graphs $\G$) we need to deal with is at most
\begin{eqnarray*}
\sum_{s < 5n/\log n} \binom{n}{s}^2 2^{(\log n)^2} &\le& 2 \cdot \binom{n}{5n/\log n}^2 2^{(\log n)^2} \le 2 \cdot \left( \frac{en}{5n/\log n} \right)^{10n/\log n} 2^{(\log n)^2} \\
&\le& \exp \left( \frac{ 10 n \log\log n}{\log n} + O( (\log n)^2 ) \right) \le \exp \left( \frac{ 15 n \log\log n}{\log n} \right).
\end{eqnarray*}
Comparing it with an upper bound for the failure probability for each configuration, we get that if $s < \frac{5n}{\log n}$ and $|V_1| \geq \frac{5000 n \log\log n}{\log n}$, then w.h.p.\ $|V_2| > \frac{100 n \log \log n}{\log n}$. Since $X'_0 \leq n-\ell-s-|V_2| \le n-\ell-|V_2|$, we get that
\[
\k(G_S) \leq X'_0 + \eta \le n-\ell-\frac{100 n \log\log n}{\log n}+\eta < n-\ell,
\]
contradicting~\eqref{p1}.

\subsection{$n-\frac{5001 n \log\log n}{\log n}<\ell\leq n-1$}

The two special copies of the two popular colours can give us some unnecessary technical problems in the following calculations. So let us delete the two edges, $e_1,e_2$, associated with special copies so that each colour is used exactly twice. The graph $G^*$ obtained this way has $2n-2$ edges. Deleting edges can only increase the number of connected components so it suffices to prove that this quantity is small enough to satisfy~\eqref{eq:condition} after the deletion. It is straightforward to show that $G^*$ remains connected w.h.p.\ but we will not need this fact in our argument. 

For the range of $\ell$ considered in this section, it is easier to concentrate on the largest set of colours $I \subseteq Q$ and the associated set of edges $S=C_I \cup \{e_1, e_2\}$ for which $G_S$ has too many components i.e. $\k(G_S) \ge n-\ell+1$, in violation of \eqref{eq:condition}. Note that for every colour $c$ not in $I$, the two edges of colour $c$ in $G^*$ join distinct components of $G_S$; otherwise, $I\cup\set{c}$ also yields a graph with too many components. This means that there are no edges of colour belonging to $Q \setminus I$ in $G^*$ joining vertices in the same component of $G_S$. Put another way, let $\cC_m$ be the event in $G_{2,q}$ that we can find $m$ colours and the associated $m$ unique pairs of edges $M$ such that 
\begin{itemize}
\item [(i)] $M \cap \{e_1, e_2\} = \emptyset$,
\item [(ii)] each pair in $M$ has the same colour (that is distinct from colours of other pairs in $M$),
\item [(iii)] $G_{\bar M}=G^*-M=G_{2-out}-(M \cup \{e_1, e_2\})$ has at least $m+2$ components, and
\item [(iv)] no edge of $M$ joins two vertices of the same component of $G^*$. 
\end{itemize}
We have to show that $\cC_m$ is unlikely, for $1 \leq m\leq m_0 := \frac{5001 n \log\log n}{\log n}$.

Suppose that $V_1,V_2,\ldots, V_p$ are the components of $G_{\bar M}$. Let $n_i = |V_i|$. We will use $\d_v$ for the number of choices of $v$ in $G_{2-out}$ outside its component in $G_{\bar M}$, and let $\D_i=\sum_{v\in V_i}\d_v$. Note that the number of edges in component $V_i$ is $2n_i - \D_i$. Hence, 
\begin{equation}
\D_i\leq n_i+1, \label{eq:bound_for_delta}
\end{equation} 
as otherwise there are not enough edges inside $V_i$ to get connectivity. It follows that 
\begin{align*}
\Pr(\cC_m) & \leq \binom{n-1}{m} \sum_{p=m+2}^{2m+3} \sum_{\substack{n_1+\cdots+n_p=n\\ n_1\geq n_2\geq \cdots\geq n_p\geq 1}} \frac{1}{\Psi(n_1,\ldots,n_p)}\binom{n}{n_1,n_2,\ldots,n_p} \\
& \qquad \times \sum_{\substack{\D_1+\cdots+\D_p=2m \\ \D_1, \ldots, \D_p \ge 0}} \prod_{i=1}^p\brac{\frac{n_i}{n}}^{2n_i-\D_i}\brac{1-\frac{n_i}{n}}^{\D_i}\binom{2n_i}{\D_i}\frac{1}{\binom{2n}{2m}},
\end{align*}
where
\[
\Psi(n_1,\ldots,n_p) = \prod_{i=1}^n \ell_i! \quad \text{ with } \quad \ell_i=|\set{j \in [p] :n_j=i}|.
\]
Indeed, we first need to choose the colours $M$ which can be done in $\binom{n-1}{m}$ ways. There are at least $m+2$ components in $G_{\bar M}$ but, since we removed exactly $2m+2$ edges from $G_{2-out}$ to get $G_{\bar M}$, the number of them is at most $2m+3$. We then need to choose the component sizes and the components in $\sum_{\substack{n_1+\cdots+n_p=n\\ n_1\geq n_2\geq \cdots\geq n_p\geq 1}} \binom{n}{n_1,n_2,\ldots,n_p} / \Psi(n_1,\ldots,n_p)$ ways. Function $\Psi(n_1,\ldots,n_p)$ removes an implicit ordering of the components in the multinomial coefficient. We then consider all possibilities for the number of $M$ coloured edges (the $2m$ edges present in $G^*$) leaving each component in $\sum_{\substack{\D_1+\cdots+\D_p=2m \\ \D_1, \ldots, \D_p \ge 0}}$ ways. The factor $\binom{2n_i}{\D_i}$ accounts for choosing which of the $2n_i$ edges generated by vertices in $V_i$ have colour in $M$ and are not one of the two special edges, $e_1,e_2$.  The factor $n_i/n$ (respectively, $1-n_i/n$) is the probability that the edge choice of a vertex in $V_i$ is in $V_i$ (respectively, not in $V_i$). Finally, we need to make sure that the $2m$ edges we identified received precisely the $2m$ non-special copies of the $m$ colours we selected. This happens with probability $1/\binom{2n}{2m}$ as any set of $2m$ colours from the set of $2n$ available colours (including repetitions and including special copies) is assigned to these edges with uniform probability; only one of them has colours that are exactly the ones we selected at the very beginning (note that special colours were excluded then). 

Continuing,
\begin{align}
\Pr(\cC_m)&\leq \frac{\binom{n}{m}}{\binom{2n}{2m}}\sum_{p=m+2}^{2m+3}\sum_{\substack{n_1+\cdots+n_p=n\\ n_1\geq n_2 \geq \cdots\geq n_p\geq 1}} \frac{1}{\Psi(n_1,\ldots,n_p)} \binom{n}{n_1,n_2,\ldots,n_p} \nonumber\\
& \qquad \times \sum_{\D_1+\cdots+\D_p=2m}\prod_{i=1}^p\brac{\frac{n_i}{n}}^{2n_i-\D_i}\brac{1-\frac{n_i}{n}}^{\D_i}\binom{2n_i}{\D_i}\nonumber\\
&\le \frac{\binom{n}{m}}{\binom{2n}{2m}}\sum_{p=m+2}^{2m+3}\sum_{\substack{n_1+\cdots+n_p=n\\ n_1\geq n_2 \geq \cdots\geq n_p\geq 1}} \frac{1}{\Psi(n_1,\ldots,n_p)} \binom{n}{n_1,n_2,\ldots,n_p} \nonumber\\
& \qquad \times \sum_{\D_1+\cdots+\D_p=2m}\prod_{i=1}^p\brac{\frac{n_i}{n}}^{2n_i-\D_i}\binom{2n_i}{\D_i}\nonumber\\
&\leq\frac{\binom{n}{m}}{\binom{2n}{2m}}\frac{n!}{n^{2n-2m}}\sum_{\substack{p\geq m+2\\n_1+\cdots+n_{p}=n\\n_1\geq n_2\geq \cdots\geq n_{p}\geq 1\\\D_1+\cdots+\D_{p}=2m}} \frac{1}{\Psi(n_1,\ldots,n_p)} \prod_{i=1}^p\frac{ n_i^{2n_i-\D_i}}{n_i!}\binom{2n_i}{\D_i}.\label{ss0}
\end{align}

Let us prove the following simple structural property of $G_{2-out}$. It will imply that the largest component (of size $n_1$) has size asymptotic to $n$.

\begin{lemma}\label{big}
For $S\subseteq [n]$, let $e^+(S)$ denote the number of choices by vertices in $S$ that are not in $S$, and let $e(S,[n] \setminus S) = e^+(S) + e^+([n] \setminus S)$ denote the number of edges in $G_{2-out}$ that are between $S$ and its complement. Then, w.h.p.\ the following property holds for any $m$ such that $1 \le m \le m_0 := \frac{5001 n \log\log n}{\log n}$:
$$
\text{for all $S\subseteq [n], 9m \leq |S| \leq n/2$, we have $e(S,[n] \setminus S) > 2m+2$.}
$$
\end{lemma}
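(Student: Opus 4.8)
The plan is to establish the property by a first moment / union bound over the subsets $S$. First I would collapse the two quantifiers: for a fixed $S$ with $|S|=s$, among all admissible $m$ (those with $1\le m\le m_0$ and $9m\le s$) the strongest requirement comes from the largest one, $m^\ast=\min\{m_0,\lfloor s/9\rfloor\}$. Hence it suffices to show that w.h.p.\ every $S$ with $9\le s\le n/2$ satisfies $e(S,[n]\setminus S)>2m^\ast+2$, and the claim follows by a union over $S$. Since each vertex makes two choices, $e^+(S)$ is a sum over the $2s$ choices issued by vertices of $S$ of the indicator that the chosen endpoint lies outside $S$, and $e^+(S)$ and $e^+([n]\setminus S)$ are independent, being determined by disjoint sets of vertices. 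Writing $c=2m^\ast+2$ and noting that $e(S,[n]\setminus S)\le c$ forces both $e^+(S)\le c$ and $e^+([n]\setminus S)\le c$, I would use $\Pr\big(e(S,[n]\setminus S)\le c\big)\le \Pr\big(e^+(S)\le c\big)\,\Pr\big(e^+([n]\setminus S)\le c\big)$.

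For each fixed $S$ I would bound $\Pr(e^+(S)\le c)$ by prescribing which $2s-c$ of the $2s$ choices from $S$ stay inside $S$ (there are $\binom{2s}{c}$ choices) and bounding the probability that each prescribed choice is internal by $s/n$; up to a harmless $1+o(1)$ factor accounting for the distinctness of a vertex's two choices, this gives $\Pr(e^+(S)\le c)\le \binom{2s}{c}\,(s/n)^{2s-c}$. The two regimes of $s$ behave differently. For \emph{small} sets, $9\le s\le 9m_0=o(n)$, we have $c=2\lfloor s/9\rfloor+2\le 2s/9+2$, and the single factor already suffices: multiplying by $\binom{n}{s}\le(en/s)^s$ and simplifying yields an expected number of violating sets of size $s$ at most $e^{O(s)}(s/n)^{7s/9}(n/s)^2=e^{-\Theta(s\log(n/s))}$, and because $s=o(n)$ here $\log(n/s)\to\infty$ overwhelms the $e^{O(s)}$ prefactor. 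Summing over $9\le s\le 9m_0$ gives $o(1)$, the sum being dominated by the bounded values of $s$, where each term is polynomially small in $n$.

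For \emph{large} sets, $9m_0<s\le n/2$, the binding constraint is $m=m_0$, so $c=2m_0+2=o(n)$. Here $\Pr(e^+(S)\le c)$ alone does not suffice: with $\alpha=s/n$, the product $\binom{n}{s}\Pr(e^+(S)\le c)$ is $e^{o(n)}$ and its exponent degenerates to $0$ as $\alpha\to 1/2$. The second factor saves the day. Each of the two lower-tail binomial probabilities is dominated by its boundary term, giving $\Pr(e^+(S)\le c)\le e^{\,2\alpha n\ln\alpha+o(n)}$ and $\Pr(e^+([n]\setminus S)\le c)\le e^{\,2(1-\alpha)n\ln(1-\alpha)+o(n)}$, the factors $\binom{2s}{c}$, $(1-\alpha)^{-c}$ and so on being subsumed into $o(n)$ since $c=o(n)$. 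Combining with $\log\binom{n}{s}=nH(\alpha)+o(n)$, where $H$ is the natural-log binary entropy, the entropy cancels half of the exponent and leaves an expected count of at most $e^{-nH(\alpha)+o(n)}$. This is $e^{-\Theta(n)}$ when $\alpha$ is bounded away from $0$, and even at the lower boundary $\alpha=9m_0/n$ it equals $e^{-\Theta(n(\log\log n)^2/\log n)}$, which is far smaller than $1/n$; summing over $9m_0<s\le n/2$ again gives $o(1)$.

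The main obstacle is exactly this large-set regime near $s=n/2$: the combinatorial factor $\binom{n}{s}$ and the probability of an abnormally small cut are of the same exponential order when only the choices out of $S$ are used, so the bound barely fails to close, and it is the \emph{independent} contribution of the choices out of $[n]\setminus S$ that supplies the missing entropy and produces the clean $e^{-nH(s/n)}$ decay. The remaining care is bookkeeping: verifying that the per-choice probability bound $s/n$ survives the distinctness of each vertex's two choices at the cost of only a $1+o(1)$ factor, and checking that the terms hidden in $o(n)$---principally $\log\binom{2s}{c}$ with $c=O(n\log\log n/\log n)$---are of strictly smaller order than $nH(s/n)$ throughout the large-set range, so that the decay is genuine all the way down to the boundary $s=9m_0$.
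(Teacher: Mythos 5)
Your reduction over $m$ (replacing the quantifier by $m^\ast=\min\{m_0,\lfloor s/9\rfloor\}$), your small-set regime, and the idea of exploiting independence of $e^+(S)$ and $e^+([n]\setminus S)$ with entropy cancellation are all sound; the last of these is in essence what the paper also does for large sets. The genuine gap is in your large-set regime, precisely in the step you defer to ``bookkeeping''. Writing $\alpha=s/n$, $c=2m_0+2$ and $H(x)=-x\ln x-(1-x)\ln(1-x)$, your bound is $\binom{n}{s}\Pr(e^+(S)\le c)\Pr(e^+([n]\setminus S)\le c)\le e^{-nH(\alpha)+o(n)}$. This inequality is true with a uniform $o(n)$, but it is vacuous whenever $nH(\alpha)=o(n)$, which happens for every $s=o(n)$, i.e.\ on the whole stretch from $9m_0$ up to any constant fraction of $n$. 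Your proposed repair---checking that every term hidden in the $o(n)$ is of \emph{strictly smaller order} than $nH(\alpha)$---is false at and near the boundary. At $s=9m_0$ one has $nH(\alpha)\approx 9m_0\ln\ln n$, while the hidden terms include $\ln\binom{2(n-s)}{c}\approx 2m_0\ln\ln n$ (the union-bound factor on the complement side) and $c\ln(1/\alpha)\approx 2m_0\ln\ln n$ (from $(s/n)^{2s-c}=(s/n)^{2s}\cdot(s/n)^{-c}$), for a total of about $4m_0\ln\ln n$: the \emph{same} order as $nH(\alpha)$, only with a smaller constant. Moreover, the one term you single out as principal, $\ln\binom{2s}{c}$, is in fact the harmless one---it is $O(m_0)$ at the boundary. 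So as written, the conclusion ``$e^{-nH(\alpha)+o(n)}$ is far smaller than $1/n$ down to $s=9m_0$'' does not follow from your displayed estimates.

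The strategy is salvageable, but only by tracking constants rather than orders of magnitude: using $\binom{a}{c}\le(ae/c)^c$, the total correction collapses to exactly $2c\ln(2en/c)$, which one can check is at most $(4/9+o(1))\,nH(\alpha)$ uniformly over $9m_0<s\le n/2$, leaving an exponent at most $-(5-o(1))m_0\ln\ln n$. Note this is precisely where the constant $9$ in the hypothesis $9m\le|S|$ earns its keep; your large-regime argument never invokes it, and with $3m$ in place of $9m$ the identical union bound provably diverges at the boundary (the constant comparison becomes $3$ versus $4$), so the constant cannot be waved away. The paper sidesteps this delicate region altogether: it applies the one-sided bound $e^+(S)\ge s/2>2m+2$ (which needs only $s\ge 9m$) for all $s\le n/200$, and runs the two-sided computation only for $s\ge n/200$, where $nH(\alpha)=\Theta(n)$ dominates every correction term with room to spare. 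Either fix works; yours just requires the sharper bookkeeping that your write-up asserts is unnecessary.
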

\begin{proof}
We will independently deal with small and large sets by proving the following statement. For any $m$ such that $1 \le m \le m_0 := \frac{5001 n \log\log n}{\log n} = o(n)$, the following two properties hold with probability $1-O(n^{-1})$:
\begin{enumerate}[(a)]
\item 
\beq{badS}{
\text{for all $S\subseteq [n], 9m \leq |S|=s\leq n/200$, we have $e^+(S) \geq s/2 > 2m+2$.}
}
\item 
\beq{badS1}{
\text{for all $S\subseteq [n], n/200\leq |S|=s\leq n/2$, we have $e(S,[n] \setminus S) > 2m+2$.}
}
\end{enumerate}

Note that, since $16 e^2 < 200$,
\begin{align*}
\Pr(\neg\eqref{badS}) &\leq \sum_{s=9m}^{n/200} \binom{n}{s} \binom{2s}{3s/2} \bfrac{s}{n}^{3s/2} \leq \sum_{s=9m}^{n/200} \bfrac{en}{s}^s 2^{2s} \bfrac{s}{n}^{3s/2} \\
&\leq \sum_{s=9m}^{n/200} \brac{\frac{s}{n}\cdot 16 e^2}^{s/2}=O(n^{-1}),
\end{align*}
so property~(a) holds. 

To see that property~(b) holds too, note that
\begin{align*}
\Pr(\neg\eqref{badS1}) &\leq \sum_{s=n/200}^{n/2} \sum_{i=0}^{2m+2} \sum_{j=0}^{i} \binom{n}{s} \binom{2s}{j} \bfrac{s}{n}^{2s-j} \brac{1-\frac{s}{n}}^{j} \binom{2(n-s)}{i-j} \bfrac{s}{n}^{i-j} \brac{1-\frac{s}{n}}^{2(n-s)-i+j} \\
&=\sum_{s=n/200}^{n/2} \sum_{i=0}^{2m+2} \sum_{j=0}^{i} \binom{n}{s} \binom{2s}{j} \binom{2(n-s)}{i-j} \bfrac{s}{n}^{2s+i-2j} \brac{1-\frac{s}{n}}^{2(n-s)-i+2j} \\
&= O(m^2) \sum_{s=\s n=n/200}^{n/2} \bfrac{1}{\s^\s(1-\s)^{1-\s}}^n \binom{2n}{2m+2}^2 \s^{2\s n - (2m+2)} (1-\s)^{2(1-\s)n - (2m+2)} \\
&= O(m^2) \sum_{s=\s n=n/200}^{n/2}(\s^\s(1-\s)^{1-\s})^n \binom{2n}{2m+2}^2 \left( \s(1-\s) \right)^{-(2m+2)} \\
&= O(m^2n) \ c^n \exp \left( O \left( \frac {n (\log \log n)^2}{\log n} \right) \right) = c^n e^{o(n)} = O(n^{-1}),
\end{align*}
where $c=(1/200)^{1/200}(199/200)^{199/200}<0.97$. (In the above computation, $i$ corresponds to $e(S,[n] \setminus S)$ and $j$ corresponds to $e^+(S)$.)
\end{proof}

The lemma implies that we may assume that 
\beq{n1}{
n_1\geq n-9m \text{ in \eqref{ss0}};
} 
otherwise, the number of edges joining distinct components in $G_{\bar M}$ would be greater than $2m+2$. Hence, we may rewrite~\eqref{ss0} as follows:
\begin{align*}
\Pr(\cC_m) &\leq \frac{\binom{n}{m}}{\binom{2n}{2m}}\frac{n!}{n^{2n-2m}} \sum_{p=m+2}^{2m+3} \sum_{s=p-1}^{9m} \sum_{\substack{n_1+\cdots+n_{p}=n\\n-s=n_1\geq n_2\geq \cdots\geq n_{p}\geq 1\\\D_1+\cdots+\D_{p}=2m}} \frac{1}{\Psi(n_1,\ldots,n_p)} \prod_{i=1}^p\frac{ n_i^{2n_i-\D_i}}{n_i!}\binom{2n_i}{\D_i}.
\end{align*}
Define 
\[
f(a,x)=a^{2a-x}\binom{2a}{x}.
\]
Note that if $x\geq1$, then
$$
\frac{f(a,x)}{f(a,x-1)}=\frac{2a-x+1}{ax}\leq \frac{2}{x},
$$
and so $f(a,x) \le \frac {2^x}{x!} f(a,0)$. Using this observation, we get that
\begin{align*}
\sum_{\D_1+\cdots+\D_p = 2m} & \prod_{i=1}^{p}n_i^{2n_i-\D_i}\binom{2n_i}{\D_i} \\
&\le \sum_{\D_1, \ldots, \D_p \leq 2m}\prod_{i=1}^{p}n_i^{2n_i-\D_i}\binom{2n_i}{\D_i}\\
&= \sum_{\D_1, \ldots, \D_{p-1} \leq 2m} \brac{ \prod_{i=1}^{p-1}n_i^{2n_i-\D_i} \binom{2n_i}{\D_i} } \sum_{\D_p\leq 2m}n_p^{2n_p-\D_p} \binom{2n_p}{\D_p} \\
&\leq \sum_{\D_1, \ldots, \D_{p-1} \leq 2m} \brac{ \prod_{i=1}^{p-1}n_i^{2n_i-\D_i} \binom{2n_i}{\D_i} } n_p^{2n_p-0} \binom{2n_i}{0} \brac{1+\frac{2}{1}+\frac21\cdot\frac{2}{2}+\frac21\cdot\frac22\cdot\frac23+\cdots} \\
&\leq \sum_{\D_1, \ldots, \D_{p-1} \leq 2m} \brac{ \prod_{i=1}^{p-1}n_i^{2n_i-\D_i} \binom{2n_i}{\D_i} } n_p^{2n_p} \sum_{k \ge 0} \frac {2^k}{k!} \\
&= e^2 n_p^{2n_p} \sum_{\D_1, \ldots, \D_{p-1} \leq 2m} \prod_{i=1}^{p-1}n_i^{2n_i-\D_i}\binom{2n_i}{\D_i}\\
&\leq \ldots \le (e^2)^p \prod_{i=1}^{p}n_i^{2n_i}.
\end{align*}
Unfortunately, the constant
$$
e^2 = \sum_{k \ge 0} \frac {2^k}{k!}
$$
associated with the sum over all possible values of $\Delta_i$ is too large for the final argument to follow. Fortunately, any constant smaller than $e^2$ would work. We may squeeze a bit more by using the following observation. Since $\sum_{i=2}^p n_i = s \le 9m$ (see \eqref{n1}) and $p-1 \ge m+1$, there are at most $p/2$ values of $n_i$, $i \ge 2$, that are at least $18$ ($n_1 \sim n$ certainly is at least 18); the remaining ones are at most 17. It is important to notice that the sequence of $n_i$'s is non-increasing so we conclude that at least the last $p/2-1$ values of $n_i$ are at most 17. As a result, since $\Delta_i \le n_i +1$ (see~(\ref{eq:bound_for_delta})), the corresponding values of $\Delta_i$'s are at most 18 (we will refer to them as small). The contribution from small $\D_i$'s is $A$, where
$$
A = \sum_{k \le 18} \frac {2^k}{k!} \le e^2 - 10^{-12}.
$$
We get that
\begin{align*}
\sum_{\D_1+\cdots+\D_p = 2m} \prod_{i=1}^{p}n_i^{2n_i-\D_i}\binom{2n_i}{\D_i} & \le (e^2)^{p/2+1} A^{p/2-1} \prod_{i=1}^{p}n_i^{2n_i} \\
& = \frac {e^2}{A} ( e^2 A)^{p/2} \prod_{i=1}^{p}n_i^{2n_i} \le 2 B^p \prod_{i=1}^{p}n_i^{2n_i},
\end{align*}
where
$$
B = e \sqrt{A} \le e^2 - 10^{-12}.
$$

It follows that
$$
\Pr(\cC_m) \leq 2 \frac{\binom{n}{m}}{\binom{2n}{2m}}\frac{n!}{n^{2n-2m}} \sum_{p=m+2}^{2m+3} B^{p} \sum_{s=p-1}^{9m} \sum_{\substack{n_1+\cdots+n_{p}=n\\n-s=n_1\geq n_2\geq \cdots\geq n_{p}\geq 1}} \frac{1}{\Psi(n_1,\ldots,n_p)} \prod_{i=1}^p\frac{ n_i^{2n_i}}{n_i!}.
$$
Now, for a given sequence $n_1, \ldots, n_p$ and an integer $i$, $2 \le i \le p$, let us define
\[
g(a,b)=\frac{a^{2a}b^{2b}}{a!b! \Psi(a, n_2, \ldots, n_{i-1}, b, n_{i+1}, \ldots, n_p)} 
\]
and suppose that $n \sim a \gg 9m \ge b>1$. Then, since $(1+ 1/x)^x$ is an increasing function of $x$ (tending to $e$ but we do not need this fact) and 
$$
\frac {\Psi(a, n_2, \ldots, n_{i-1}, b, n_{i+1}, \ldots, n_p)}{\Psi(a+1, n_2, \ldots, n_{i-1}, b-1, n_{i+1}, \ldots, n_p)} \le 2,
$$
we get that 
\begin{align*}
\frac{g(a,b)}{g(a+1,b-1)} &\le \frac{a^{2a}}{(a+1)^{2a+2}} \cdot \frac{b^{2b}}{(b-1)^{2b-2}} \cdot \frac {a+1}{b} \cdot 2\\
&= \frac { \left( 1 + \frac {1}{b-1} \right)^{2(b-1)} } {  \left( 1 + \frac {1}{a} \right)^{2a} } \cdot \frac {b}{a+1} \cdot 2 \leq \frac{2b}{a} \le \frac {20m}{n}. 
\end{align*}
It implies that the terms corresponding to sequences of $n_i$'s with larger values of $n_1$ (smaller values of $s$ in our bound) are much larger. On the other hand, there are more sequences with smaller values of $n_1$ (larger values of $s$) to consider. However, since $n_2+\cdots+n_{p} = s$ and $n_i \ge 1$, there are only $\binom{s-1}{p-2}$ choices for the sequence of $n_i$'s to consider. Combining the two observations together, we get that the term corresponding to $s=p-1$ (and the associated unique sequence of $n_i$'s) is a dominating term:
\begin{align*}
\Pr(\cC_m) &\leq 2 \frac{\binom{n}{m}}{\binom{2n}{2m}}\frac{n!}{n^{2n-2m}} \sum_{p=m+2}^{2m+3} B^{p} \frac{ (n-p+1)^{2(n-p+1)}}{(n-p+1)!} \frac {1}{\Psi(n-p+1,1,\ldots,1)} \\
& \qquad \cdot \left( 1 + \sum_{s=p}^{9m} \binom{s-1}{p-2} \left( \frac {20m}{n} \right)^{s-(p-1)} \right) \\
&\leq 3 \frac{\binom{n}{m}}{\binom{2n}{2m}}\frac{n!}{n^{2n-2m}} \sum_{p=m+2}^{2m+3} B^{p} \frac{ (n-p+1)^{2(n-p+1)}}{(n-p+1)! (p-1)!}.
\end{align*}
Note that the ratio between the $(p+1)$st term and the $p$th one is 
\begin{align*}
B &\cdot \frac { (n-p)^{2(n-p)} }{ (n-p+1)^{2(n-p+1)} } \cdot \frac { (n-p+1)! }{ (n-p)! } \cdot \frac {p-1}{p}\\
& = B \cdot \left( 1 - \frac {1}{n-p+1} \right)^{2(n-p+1)} \cdot \frac {n-p+1}{(n-p)^2}  \cdot \frac {p-1}{p} = \Theta (1/n).
\end{align*}
Hence,
\begin{align}
\Pr(\cC_m) &\le 4 \frac{\binom{n}{m}}{\binom{2n}{2m}}\frac{n! B^{m+2}}{n^{2n-2m}} \frac{ (n-m-1)^{2(n-m-1)}}{(n-m-1)! (m+1)!}. \label{eq:bound_cm}
\end{align}
If $m$ is a constant, then $\Pr(\cC_m) = O(1/n)$. In order to investigate larger values of $m$, note that the ratio between the $(m+1)$st term and the $m$th one is
\begin{align*}
\frac {\binom{n}{m+1}} {\binom{n}{m}} & \cdot \frac{\binom{2n}{2m}} {\binom{2n}{2m+2}} \cdot B n^2 \cdot \frac {(n-m-2)^{2(n-m-2)}}{(n-m-1)^{2(n-m-1)}} \cdot \frac {n-m-1}{m+2} \\
&= \frac {n-m}{m+1} \cdot \frac {(2m+1)(2m+2)}{(2n-2m)(2n-2m-1)} \cdot B n^2 \cdot \left( 1 - \frac {1}{n-m-1} \right)^{2(n-m-1)} \\
& \qquad \cdot \frac {n-m-1}{(n-m-2)^2} \cdot \frac {1}{m+2} \to (B)(e^{-2}) < 1 - 10^{-13} \qquad \text { as } m,n \to \infty.
\end{align*}
Hence, if $m$ is sufficiently large (say, $m \ge m'$) the ratio is at most, say, $1 - 10^{-14}$. Combining the two observations together, we get that 
$$
\sum_{1 \le m \le m_0} \Pr(\cC_m) = \sum_{1 \le m < m'} \Pr(\cC_m) + \sum_{m' \le m \le m_0} \Pr(\cC_m) = O \Big( \Pr(\cC_{1}) + \Pr(\cC_{m'}) \Big) = O \left( \frac {1}{n} \right) = o(1).
$$
That finishes the proof of Theorem~\ref{th1}.

\section{Matchings and Hamilton Cycles}

In this paper, we dealt with rainbow spanning trees proving the strongest possible result, both in terms of $q$, the number of colours, and $k$, the degree of the associated random graph. We leave investigating other rainbow structures for future research. 

Recall that it was shown by Frieze~\cite{F1} that $G_{2-out}$ has a perfect matching w.h.p., and by Bohman and Frieze~\cite{BF} that $G_{3-out}$ is Hamiltonian w.h.p. Both results are sharp. Hence, based on our observation in Section~\ref{sec:monotonicity}, it is natural to investigate the following questions.
\begin{itemize}
\item What is the smallest value of $q$ such that $G_{2,q}$ has a Rainbow Perfect Matching (\RPM) w.h.p.? (Trivially, $q \ge n/2$ and $q \le 2n$ as $G_{2,2n}$ is rainbow.)
\item What is the smallest value of $q$ such that $G_{3,q}$ has a Rainbow Hamilton Cycle (\RHC) w.h.p.? (Trivially, $q \ge n$ and $q \le 3n$ as $G_{3,3n}$ is rainbow.)
\end{itemize}

\end{document}